\ifpdf \usepackage[pdftex]{graphicx} \pdfcompresslevel=9
\else \usepackage[dvips]{graphicx} \fi
\newtheorem{theorem}{Theorem}
\newtheorem{lemma}{Lemma}
\newtheorem{corollary}{Corollary}
\newtheorem{observation}{Observation}
\theoremstyle{definition}
\newtheorem{definition}{Definition}
\providecommand{\abs}[1]{{\lvert#1\rvert}}
\newcommand{\RR}{\mathbb{R}}
\newcommand{\CC}{\mathbb{C}}
\def\xb{{\bar{x}}}
\def\yb{{\bar{y}}}
\begin{document}

\title{On Divergence-based Distance Functions for \\
Multiply-connected Domains}
\author{Renjie Chen \and Craig Gotsman \and Kai Hormann}
\date{}

\maketitle

%%%%%%%%%%%%%%%%%%%%%%%%%%%%%%%%%%%%%%%%%%%%%%%%%%%%%

\begin{abstract}
Given a finitely-connected bounded planar domain $\Omega$, it is possible to define a {\it divergence distance} $D(x,y)$ from $x\in\Omega$ to $y\in\Omega$, which takes into account the  complex geometry of the domain. This distance function is based on the concept of $f$-divergence, a distance measure traditionally used to measure the difference between two probability distributions. The relevant probability distributions in our case are the Poisson kernels of the domain at $x$ and at $y$. We prove that for the $\chi^2$-divergence distance, the gradient by $x$ of $D$ is opposite in direction to the gradient by $x$ of $G(x,y)$, the Green's function with pole $y$. Since $G$ is harmonic, this implies that $D$, like $G$, has a single extremum in $\Omega$, namely at $y$ where $D$ vanishes. Thus $D$ can be used to trace a gradient-descent path within~$\Omega$ from $x$ to $y$ by following $\nabla_x D(x,y)$, which has significant computational advantages over tracing the gradient of $G$. This result can be used for robotic path-planning in complex geometric environments.
\end{abstract}

%%%%%%%%%%%%%%%%%%%%%%%%%%%%%%%%%%%%%%%%%%%%%%%%%%%%%

\section{Introduction}\label{sec:introduction}

The $f$-divergence function, first introduced by Kullback and Leibler~\cite{Kullback:1951:OIA} and later generalized by Csisz{\'a}r~\cite{Csiszar:1967:ITM}, measures the difference between two probability distributions.

\begin{definition}
Let $f:\RR \rightarrow\RR$ be a strictly convex function with $f(1)=0$ and $p,q\colon E\to[0,1]$ be two non-negative real functions on some domain $E$ such that
\[
  \int_E p(t) dt = \int_E q(t) dt = 1.
\]
We then call
\begin{equation}\label{eq:f-divergence}
  d_f(p,q) := \int_E p(t) f\biggl(\frac{p(t)}{q(t)}\biggr) dt
\end{equation}
the \emph{$f$-divergence} between $p$ and $q$.
\end{definition}

It is well known~\cite{Kullback:1951:OIA} that
\[
  d_f(p,q) \ge 0
\]
and
\[
  d_f(p,q) = 0 \qquad\iff\qquad p = q.
\]
The $f$-divergence is not necessarily a metric, since it is typically not symmetric and does not satisfy the triangle inequality. Many instances of $f$ have been proposed over the years, each suitable for some specific application, mostly in probability theory, statistics, and information theory. The interested reader is referred to Liese and Vajda~\cite{Liese:2006:ODA} for a survey of the possibilities. We just mention the most popular choices of $f$: Kullback-Liebler: $f(t)=-\log t$, Total Variation: $f(t)=|t-1|$, Chi Squared ($\chi^2$): $f(t)=t^2-1$, Hellinger: $f(t)=(\sqrt{t}-1)^2$.

In this context, it is useful to remember the concept of the \emph{dual function} of $f$,
\[
  f^\ast(t) = t f(1/t).
\]
For example, if $f(t)=-\log t$, then $f^\ast(t)=t \log t$, and if $f(t)=\abs{t-1}$, then $f^\ast(t)=f(t)$. It is easy to verify that
\begin{itemize}
  \item $f$ is (strictly) convex if and only if $f^\ast$ is (strictly) convex;
  \item $d_f(p,q)=d_{f^\ast}(q,p)$;
  \item $f(1) \le d_f(p,q) \le f(0)+f^\ast(0)$.
\end{itemize}

Chen et al.~\cite{Chen:2016:PPW} took advantage of the concept of $f$-divergence to define the $f${\it-divergence distance} $D_f(x,y)$ between pairs of points in a planar domain $\Omega$ (see Section~\ref{sec:divergence_distance}) and showed that this distance can play an important role in robotic path-planning applications. In that context, given two points $x,y$ in a geometrically complex domain $\Omega$, it is important to be able to generate a path from $x$ to $y$ which stays completely within $\Omega$. Given a non-negative distance function $D(x,y)$ on the domain, which vanishes only when $x=y$, it is possible to trace such a path by following the gradient of $D$, as long as this gradient does not vanish at some local minimum of $D$. Fig.~\ref{fig:example} (Left) shows some examples of such paths. Chen et al.~\cite{Chen:2016:PPW} show that the $f$-divergence distance $D_f(x,y)$ has this important property in the case of a simply-connected bounded domain. Their proof uses the concept of conformal invariance and relies heavily on the Riemann conformal mapping theorem~\cite{Ahlfors:1979:CAA}, namely, that any simply-connected domain is conformally equivalent to the unit disk. They show that $D_f$ is a conformal invariant of the domain, as is the Green's function $G$, leading to the conclusion that the gradient of the $f$-divergence distance function is opposite in direction to the gradient of the Green's function, independent of $f$. Since the harmonic $G$ has the required property, so does $D_f$.

Although  $D_f$ and $G$ both have the same desirable property and generate identical gradient paths between points, Chen et al.~\cite{Chen:2016:PPW,Chen:2017:PDF} show that there are significant computational advantages to using $D_f$ instead of $G$ in practical path-planning applications, where the domain is discretized.

In this paper we generalize the results of Chen et al.~\cite{Chen:2016:PPW} to the more challenging case of multiply-connected domains. Since these are not conformally equivalent to the unit disk, the proof technique used for the simply-connected domain is no longer applicable. Instead, in our central Theorem \ref{theorem:parallel}, we provide a direct proof for the $\chi^2$-divergence distance using methods of complex analysis (see Section~\ref{sec:chi2}).

%%%%%%%%%%%%%%%%%%%%%%%%%%%%%%%%%%%%%%%%%%%%%%%%%%%%%

\section{The $f$-divergence Distance}\label{sec:divergence_distance}

The $f$-divergence distance between two points in a  bounded planar domain $\Omega$ is defined using the \emph{Poisson kernel} of $\Omega$ at $y\in\Omega$,
\begin{equation}\label{eq:P}
  P(w,y) = - \frac{1}{2\pi} \frac{\partial G}{\partial n(w)} (w,y),
  \qquad w \in \partial\Omega,
\end{equation}
which is the derivative of the Green's function $G(x,y)$ with pole $y$ at the boundary point $w$, in the direction of the unit outer normal $n(w)$~\cite{Garnett:2005:HM}. Thus $P$ is positive in $\Omega$ and for all $y\in\Omega$,
\begin{equation*}
  \oint_{\partial\Omega} P(w,y) \abs{dw} =1
\end{equation*}
where $\abs{dw}$ is the usual arc length differential.
\begin{definition}
Let $f$ be a strictly convex function with $f(1)=0$ and $\Omega$ be a bounded planar domain. The \emph{$f$-divergence distance} from $x\in\Omega$ to $y\in\Omega$ is the contour integral
\begin{equation}\label{eq:df}
  D_f(x,y) :=  d_f(P(\cdot , x), P(\cdot, y))=\oint_{\partial\Omega} P(w,y) f\biggl(\frac{P(w,x)}{P(w,y)}\biggr) \abs{dw},
\end{equation}
\end{definition}
The following Theorem summarizes the basic properties of the $f$-divergence distance.
\begin{theorem}\label{theorem:div_distance}
Let $D_f(x,y)$ be a $f$-divergence distance on a planar domain $\Omega$. Then:
\begin{itemize}
\item $D_f$ is non-negative: $D_f(x,y)\ge 0$.
\item $D_f$ is symmetric: $D_f(x,y)=D_f(y,x)$.
\item $D_f$ is constant on the boundary $\partial\Omega$ (unless it is infinite there).
\item $D_f$ is strictly subharmonic: for any $y\in\Omega$, $\nabla_x^2 D_f(x,y)>0$.
\end{itemize}
\end{theorem}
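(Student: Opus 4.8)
The plan is to establish each of the four properties in turn, drawing on the structure of the $f$-divergence and standard potential theory for the harmonic Green's function.

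The plan is to establish the four properties separately, leaning on the abstract $f$-divergence facts recalled in the Introduction together with classical facts about the Poisson kernel and harmonic measure. \emph{Non-negativity} is immediate: for each fixed $y$ the functions $P(\cdot,x)$ and $P(\cdot,y)$ are genuine probability densities on $\partial\Omega$ (positive, with unit contour mass), so $D_f(x,y)=d_f(P(\cdot,x),P(\cdot,y))\ge 0$ by the inequality $d_f(p,q)\ge 0$. \emph{Strict subharmonicity} is the one genuine computation. The key observation is that $P(w,\cdot)$ is harmonic in its second argument for each fixed $w\in\partial\Omega$, since $G(w,x)=G(x,w)$ is harmonic in $x$ for $x\neq w$ and the boundary normal derivative in $w$ commutes with $\nabla_x^2$. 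Differentiating twice under the integral sign and writing $r=P(w,x)/P(w,y)$, the harmonic term drops out and one is left with
\[
  \nabla_x^2 D_f(x,y) = \oint_{\partial\Omega} \frac{f''(r)}{P(w,y)}\,\abs{\nabla_x P(w,x)}^2\,\abs{dw},
\]
which is non-negative because $f''>0$ (strict convexity) and $P(w,y)>0$, and strictly positive because $\nabla_x P(w,x)$ cannot vanish for almost every $w$. The routine points to check here are the justification of differentiation under the integral (smoothness of $P$ for $x$ away from $\partial\Omega$) and the non-vanishing of the gradient.

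For \emph{boundary constancy} I would study the limit $x\to\xi\in\partial\Omega$, writing $\omega_x=P(\cdot,x)\,\abs{dw}$ and $\omega_y=P(\cdot,y)\,\abs{dw}$ for the two harmonic measures. As $x\to\xi$ the measure $\omega_x$ concentrates to the point mass $\delta_\xi$, while $\omega_y$ stays a fixed positive density, so the two become mutually singular. Splitting $\partial\Omega$ into a shrinking arc $I$ about $\xi$ and its complement, on the complement $r\to 0$ and the integrand tends to $f(0)$ with total $\omega_y$-weight tending to $1$, whereas on $I$ one rewrites $P(w,y)f(r)=P(w,x)\,f(r)/r$ and uses $f(r)/r\to f^\ast(0)$ as $r\to\infty$ together with $\omega_x(I)\to 1$. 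The total limit is $f(0)+f^\ast(0)$, independent of both $\xi$ and $y$, which is exactly the upper bound in $f(1)\le d_f\le f(0)+f^\ast(0)$; it is finite unless $f(0)+f^\ast(0)=\infty$ (as for Kullback--Leibler).

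\emph{Symmetry} is the property I expect to be the main obstacle, because it is special to the Poisson kernel rather than a feature of a generic $f$-divergence: the dual identity only gives $D_f(x,y)=d_{f^\ast}(P(\cdot,y),P(\cdot,x))=D_{f^\ast}(y,x)$, which equals $D_f(y,x)$ only when $f=f^\ast$. The mechanism I would exploit is the conformal invariance of harmonic measure, $P(w,z)\,\abs{dw}=P(\psi(w),\psi(z))\,\abs{d\psi(w)}$ for a conformal self-map $\psi$: if $\psi$ is an involution of $\Omega$ interchanging $x$ and $y$, then the substitution $w\mapsto\psi(w)$ carries $D_f(x,y)$ to $D_f(y,x)$, the Jacobian cancelling in the ratio $r$ and converting the prefactor. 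For simply-connected $\Omega$ this reduces, after a Riemann map, to the disk, where the M\"obius involution $\psi(w)=(a-w)/(1-aw)$ swaps $0$ and a real point $a$ and settles the claim. The difficulty is that a multiply-connected $\Omega$ admits no such automorphism in general, so this route collapses; here one must instead prove a genuine reciprocity identity for the two harmonic measures (equivalently, produce a measure-preserving involution $\tau$ of $\partial\Omega$ with $P(\tau(v),x)\,\abs{\tau'(v)}=P(v,y)$ and conversely), or else derive symmetry a posteriori from the gradient identity of the central Theorem~\ref{theorem:parallel} combined with the symmetry of $G$.
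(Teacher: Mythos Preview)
The paper itself does not prove this theorem; it simply cites Chen et al.\ \cite{Chen:2016:PPW}, whose setting is the simply-connected case. So there is no in-paper argument to compare against, and your proposal is necessarily more detailed than what the paper supplies.

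Your treatments of non-negativity, strict subharmonicity, and boundary constancy are correct and, importantly, make no use of any conformal map, so they go through for multiply-connected $\Omega$ as stated in the theorem. The subharmonicity computation is the right one; the only small caveat is that ``strictly convex'' does not literally force $f''>0$ pointwise, but for the smooth $f$'s in play this is harmless. The boundary argument correctly pins the limiting value at $f(0)+f^\ast(0)$, which matches the general upper bound recalled in the Introduction and explains the parenthetical ``unless it is infinite there.''

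On symmetry you have correctly located the real difficulty. The M\"obius-involution argument you sketch for simply-connected $\Omega$ is exactly the Chen et al.\ mechanism after reducing to the disk, so there your route coincides with the cited one. Your observation that this collapses for multiply-connected $\Omega$, where no conformal automorphism need swap two prescribed interior points, is also correct---and the paper's citation does not obviously cover that case either. Neither of your proposed workarounds closes the gap: a boundary involution $\tau$ with the density-swapping property you describe would essentially be the boundary trace of such a conformal swap and so generically fails to exist; and Theorem~\ref{theorem:parallel} concerns only the $\chi^2$ case and constrains $\nabla_x D$, not $D$ itself, so it does not deliver $D_f(x,y)=D_f(y,x)$ for general $f$. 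Thus the symmetry clause remains an acknowledged gap in your proposal for the multiply-connected setting (and, arguably, in the paper's own referencing).
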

\begin{proof}
See Chen et al. \cite{Chen:2016:PPW}.
\end{proof}

The second property is a little surprising, since in general the $f$-divergence of two probability functions (\ref{eq:f-divergence}) is not symmetric. Despite this, the $f$-divergence distance will typically not be a metric, as it does not satisfy the triangle inequality. A rare exception is the Total Variation divergence distance
\[
  D_{\mathrm{TV}}(x,y) = \oint_{\partial\Omega} \abs{P(w,x)-P(w,y)} \abs{dw}.
\]
Relying on the symmetry of $D_f$ and the properties of the dual function $f^\ast$, we have:
\begin{corollary}
The functions $f$ and $f^\ast$ generate \emph{identical} divergence distances,
\[
  D_f(x,y) = D_{f^\ast}(x,y).
\]
\end{corollary}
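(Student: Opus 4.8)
The plan is to derive the identity directly from two facts already in hand: the symmetry of $D_f$ established in Theorem~\ref{theorem:div_distance}, and the dual-function identity $d_f(p,q)=d_{f^\ast}(q,p)$ listed among the elementary properties of $f^\ast$. The strategy is to start from the definition of $D_{f^\ast}$, use the dual identity to rewrite it as an ordinary $f$-divergence with its two arguments interchanged, recognize the result as $D_f$ evaluated at the swapped pair of points, and then undo that swap using symmetry.

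Concretely, I would first write $D_{f^\ast}(x,y)=d_{f^\ast}(P(\cdot,x),P(\cdot,y))$ straight from~\eqref{eq:df}. Next I would invoke the dual identity in the form $d_{f^\ast}(a,b)=d_f(b,a)$ (which is just $d_f(p,q)=d_{f^\ast}(q,p)$ read backwards) with $a=P(\cdot,x)$ and $b=P(\cdot,y)$, obtaining $D_{f^\ast}(x,y)=d_f(P(\cdot,y),P(\cdot,x))$. By~\eqref{eq:df} again, the right-hand side is exactly $D_f(y,x)$. Finally the symmetry $D_f(y,x)=D_f(x,y)$ from Theorem~\ref{theorem:div_distance} closes the chain and yields $D_{f^\ast}(x,y)=D_f(x,y)$.

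The only thing requiring care is the bookkeeping of argument order: both the dual identity and the symmetry property involve a swap, and the claim holds precisely because these two swaps compose to the identity. There is no genuine analytic obstacle, since all the content sits in the two cited results; the main point is simply to apply them in the correct sequence and to match the arguments of $d_f$ against the Poisson kernels $P(\cdot,x)$ and $P(\cdot,y)$ correctly.
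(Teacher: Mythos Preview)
Your argument is correct and is precisely the one the paper intends: the sentence immediately preceding the corollary (``Relying on the symmetry of $D_f$ and the properties of the dual function $f^\ast$, we have:'') is the paper's entire proof, and you have merely written out the two-line chain $D_{f^\ast}(x,y)=d_{f^\ast}(P(\cdot,x),P(\cdot,y))=d_f(P(\cdot,y),P(\cdot,x))=D_f(y,x)=D_f(x,y)$ that those words encode.
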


\section{Preliminaries}\label{sec:preliminaries}

To prove our central Theorem \ref{theorem:parallel}, we need a few preliminaries about Green's functions and their consequences on divergence distance functions.

Let $\Omega$ be an open, bounded, and finitely-connected domain. Following~\cite{Garnett:2005:HM}, we consider $\Omega$ as a subset of $\CC$ and define for any $y\in\Omega$ the \emph{Green's function with pole at $y$} as
\begin{equation}\label{eq:g}
  G(x,y) = \log \frac{1}{\abs{x-y}} + H(x,y),
  \qquad x \in \bar{\Omega}\setminus\{y\},
\end{equation}
where $H(x,y)$ is the harmonic solution of the Dirichlet problem for the boundary values $f(w)=\log\abs{w-y}$, $w\in\partial\Omega$. Thus $G$ vanishes on $\partial\Omega$ and is positive and harmonic on $\Omega \setminus\{ y \}$. Furthermore, $G$ is symmetric: $G(x,y)=G(y,x)$. We may treat $x$ and $y$ as complex numbers and, taking advantage of the convenient complex algebra, use the subscripts $x$, $y$, $\xb$, and $\yb$ to denote the Wirtinger derivatives of $G(x,y)$ with respect to its first and second variable and their conjugates, for example,
\[
  G_x = \frac{\partial G}{\partial x}, \qquad
  G_\xb = \frac{\partial G}{\partial \xb}, \qquad
  G_{x\yb} = \frac{\partial^2 G}{\partial x \partial \yb},\qquad
  G_{\xb y} = \frac{\partial^2 G}{\partial \xb \partial y},
\]
and likewise for $H$. Note that according to this notation, we have, for example,
\[
  \frac{\partial}{\partial w} G(w,y) = G_x(w,y), \qquad
  \nabla_x G(w,x) = 2 \frac{\partial G}{\partial \xb} \qquad
  \nabla_y G_x(w,y) = 2 \frac{\partial}{\partial \yb} G_x(w,y) = 2 G_{x\yb}(w,y).
\]
We further say that two complex numbers $a,b\in\CC$ are \emph{proportional} and write $a\propto b$, if $\arg a=\arg b$, namely $a/b\in\RR$ with $a/b>0$.

\begin{observation}\label{observation:Poisson/Green-relation}
The Poisson kernel~\eqref{eq:P} satisfies
\[
  P(w,y)
  = \frac{1}{2\pi} \abs{\nabla_x G(w,y)}
  = \frac{1}{\pi} \abs{G_\xb(w,y)}
  = \frac{1}{\pi} \abs{G_x(w,y)},
\]
because Green's function is real, positive in $\Omega$, and vanishes at $\partial\Omega$, hence its gradient at $w\in\partial\Omega$ points in the opposite direction of $n(w)$, the unit inward normal at $w$.
\end{observation}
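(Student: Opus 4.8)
The plan is to unwind the definition~\eqref{eq:P} of the Poisson kernel and then translate the resulting real directional derivative into the complex Wirtinger notation. First I would rewrite the outer normal derivative as a Euclidean inner product, $\frac{\partial G}{\partial n(w)}(w,y) = \nabla_x G(w,y)\cdot n(w)$, so that everything is expressed through the gradient $\nabla_x G$.

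The conceptual heart of the argument is the geometric fact already flagged in the statement. Since $G(\cdot,y)$ is real-valued, strictly positive on $\Omega$, and vanishes identically on $\partial\Omega$, the boundary is precisely the zero level set of $G$. Hence at a boundary point $w$ the gradient $\nabla_x G(w,y)$ is orthogonal to $\partial\Omega$ and, because $G$ increases as one moves from the boundary into the interior where it is positive, it points along the inward normal $-n(w)$. I would make this precise by noting that along any inward-pointing direction the directional derivative of $G$ is non-negative, with the steepest increase attained in the direction $-n(w)$, so that $\nabla_x G(w,y) = -\abs{\nabla_x G(w,y)}\,n(w)$. Substituting into the inner product gives $\frac{\partial G}{\partial n(w)}(w,y) = -\abs{\nabla_x G(w,y)}$, and the minus sign in~\eqref{eq:P} cancels to yield the first equality $P(w,y)=\frac{1}{2\pi}\abs{\nabla_x G(w,y)}$.

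The two remaining equalities are just a change of notation. Writing $x=u+iv$, the real gradient $(G_u,G_v)$ corresponds to the complex number $G_u+iG_v = 2G_\xb$, so $\abs{\nabla_x G} = 2\abs{G_\xb}$; this absorbs the factor of two and produces the middle expression. Finally, because $G$ is real the two Wirtinger derivatives are conjugate, $G_x = \overline{G_\xb}$, so $\abs{G_x} = \abs{G_\xb}$, which gives the last expression. The only step demanding genuine care is the sign/direction argument of the second paragraph — confirming that the gradient is \emph{anti}-parallel rather than parallel to the outer normal — whereas the conversions in the final paragraph are entirely routine.
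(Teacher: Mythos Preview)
Your proposal is correct and follows exactly the reasoning the paper sketches in the one-sentence ``because'' clause of the Observation: the gradient of $G$ at a boundary point is anti-parallel to the outer normal since $G$ is positive inside and vanishes on $\partial\Omega$, which turns the signed normal derivative into $-\abs{\nabla_x G}$; the remaining equalities are the routine Wirtinger identities $\nabla_x G = 2G_\xb$ and $G_x=\overline{G_\xb}$ for real $G$. You have simply written out in full what the paper leaves implicit.
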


\begin{observation}\label{observation:Poisson-quotient}
For any $w\in\partial\Omega$ and $y,z\in\Omega$ with $y\ne z$, we have
\[
  \frac{P(w,y)}{P(w,z)}
  = \biggl\lvert \frac{G_x(w,y)}{G_x(w,z)} \biggr\rvert
  = \frac{G_x(w,y)}{G_x(w,z)}
\]
by Observation~\ref{observation:Poisson/Green-relation} and because $G_x(w,y)\propto G_x(w,z)$.
\end{observation}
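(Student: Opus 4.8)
The plan is to obtain both equalities from Observation~\ref{observation:Poisson/Green-relation} together with a single geometric fact: at a boundary point, the direction of $\nabla_x G(w,\cdot)$ does not depend on the location of the pole.

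First I would dispose of the first equality, which is immediate. By Observation~\ref{observation:Poisson/Green-relation}, $P(w,y)=\frac{1}{\pi}\abs{G_x(w,y)}$ and $P(w,z)=\frac{1}{\pi}\abs{G_x(w,z)}$; since the Poisson kernel is strictly positive on $\partial\Omega$, the denominator $G_x(w,z)$ is nonzero, and dividing gives $\frac{P(w,y)}{P(w,z)}=\frac{\abs{G_x(w,y)}}{\abs{G_x(w,z)}}=\bigl\lvert\frac{G_x(w,y)}{G_x(w,z)}\bigr\rvert$.

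The substance of the observation is the second equality, which is exactly the claim $G_x(w,y)\propto G_x(w,z)$, i.e.\ that the quotient is a positive real. I would argue through the gradient at the boundary. For a fixed pole, $G(\cdot,y)$ vanishes on $\partial\Omega$ and is positive inside, so, as recalled in Observation~\ref{observation:Poisson/Green-relation}, its gradient $\nabla_x G(w,y)=2G_\xb(w,y)$ points along the inward normal $-n(w)$. The key point is that $n(w)$ is determined by the boundary geometry at $w$ alone and is the same for every pole. Hence $\nabla_x G(w,y)$ and $\nabla_x G(w,z)$ are both positive multiples of the single unit vector $-n(w)$, so their quotient equals $\abs{\nabla_x G(w,y)}/\abs{\nabla_x G(w,z)}>0$; that is, $G_\xb(w,y)\propto G_\xb(w,z)$.

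Finally I would transfer this proportionality from $G_\xb$ to $G_x$. Because $G$ is real-valued we have $G_x=\overline{G_\xb}$, and conjugation fixes positive reals, so $G_x(w,y)/G_x(w,z)=\overline{G_\xb(w,y)/G_\xb(w,z)}$ is again real and positive. Therefore the modulus in the first equality may be dropped, yielding $\bigl\lvert\frac{G_x(w,y)}{G_x(w,z)}\bigr\rvert=\frac{G_x(w,y)}{G_x(w,z)}$ and completing the chain. The only step demanding care is the pole-independence of the boundary normal direction, which is what forces the two gradients to be parallel; the remaining manipulations are routine complex algebra.
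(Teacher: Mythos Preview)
Your proof is correct and is precisely the argument the paper has in mind: the first equality is Observation~\ref{observation:Poisson/Green-relation}, and the second follows because at any boundary point the gradient of $G(\cdot,y)$ points along the inward normal regardless of the pole, which is exactly the reason behind the paper's assertion that $G_x(w,y)\propto G_x(w,z)$. Your added step of passing from $G_\xb$ to $G_x$ by conjugation is a clean way to make explicit what the paper leaves implicit.
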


Using these observations, we can express the $f$-divergence distance as a complex contour integral in terms of $G_x$.

\begin{lemma}\label{lemma:df-using-gx}
The $f$-divergence distance \eqref{eq:df} satisfies
\[
  D_f(x,y) = \frac{i}{\pi} \oint_{\partial\Omega} f\biggl(\frac{G_x(w,x)}{G_x(w,y)}\biggr) G_x(w,y) dw.
\]
\end{lemma}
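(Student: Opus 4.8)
The plan is to start from the definition of the $f$-divergence distance as a contour integral with respect to arc length, namely
\[
  D_f(x,y) = \oint_{\partial\Omega} P(w,y) f\biggl(\frac{P(w,x)}{P(w,y)}\biggr) \abs{dw},
\]
and convert it into a complex contour integral by eliminating both the Poisson kernels and the arc-length differential in favor of $G_x$. First I would use Observation~\ref{observation:Poisson-quotient} to replace the argument of $f$: the ratio $P(w,x)/P(w,y)$ equals $G_x(w,x)/G_x(w,y)$, which is real and positive since $G_x(w,x)\propto G_x(w,y)$. This handles the $f(\cdot)$ factor directly, with no absolute values left over.

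Next I would handle the remaining factor $P(w,y)\abs{dw}$. By Observation~\ref{observation:Poisson/Green-relation}, $P(w,y)=\frac{1}{\pi}\abs{G_x(w,y)}$, so I need to rewrite $\frac{1}{\pi}\abs{G_x(w,y)}\abs{dw}$ as a multiple of $G_x(w,y)\,dw$. The key geometric fact is that as $w$ traverses $\partial\Omega$, the complex differential $dw$ is tangent to the boundary, and the arc-length element is $\abs{dw}=dw/T(w)$ where $T(w)=dw/\abs{dw}$ is the unit tangent. The crucial step is to relate the phase of $G_x(w,y)$ to the tangent direction: since $G$ is real and vanishes on $\partial\Omega$, its gradient $\nabla_x G = 2G_{\xb}$ points along the inward normal, so $G_x(w,y)$ (the conjugate) points along a fixed rotation of the outward normal. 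With a consistent (say, positive) orientation of $\partial\Omega$, the outward normal is $-iT(w)$, which should yield $\abs{G_x(w,y)}\abs{dw} = i\,G_x(w,y)\,dw$ after matching phases, giving the claimed factor $\frac{i}{\pi}$.

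The main obstacle, and the step deserving the most care, will be pinning down this phase/orientation bookkeeping precisely: I must verify that the product $G_x(w,y)\,dw$ is indeed a positive real multiple of $\abs{G_x(w,y)}\abs{dw}$ with exactly the factor $i$ (and not $-i$), which depends on the orientation convention for $\partial\Omega$ and on whether the gradient points along the inward or outward normal. Concretely, writing $G_x = \frac{1}{2}(G_u - iG_v)$ at $w=u+iv$ and using that the tangential derivative of $G$ along $\partial\Omega$ vanishes, one sees $\nabla G$ is normal to the boundary, so $G_x\,dw$ has constant argument along the whole boundary; evaluating that argument on one explicitly computable boundary point (or via the normal-tangent relation $n=-iT$) fixes the sign. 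Once the identity $\abs{G_x(w,y)}\abs{dw}=i\,G_x(w,y)\,dw$ is established, substituting both this and the Observation~\ref{observation:Poisson-quotient} ratio into the arc-length integral yields the complex contour integral immediately, completing the proof.
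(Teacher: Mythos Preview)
Your proposal is correct and follows essentially the same route as the paper: use Observation~\ref{observation:Poisson/Green-relation} to replace $P(w,y)\,\abs{dw}$ by $\tfrac{1}{\pi}\abs{G_x(w,y)}\,\abs{dw}$, use Observation~\ref{observation:Poisson-quotient} for the argument of $f$, and then convert $\abs{G_x(w,y)}\,\abs{dw}$ into $i\,G_x(w,y)\,dw$ via the geometric fact that the gradient of $G$ at the boundary is the inward normal (the paper phrases this as ``$G_{\xb}(w,y)$ points $90^\circ$ to the left of $dw$'' and writes $\abs{G_{\xb}(w,y)\,dw}=i\,\overline{G_{\xb}(w,y)}\,dw=i\,G_x(w,y)\,dw$). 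The only cosmetic difference is that the paper applies the two Observations in the opposite order and routes the phase computation through $G_{\xb}$ rather than $G_x$ directly.
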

\begin{proof}
By Observation~\ref{observation:Poisson/Green-relation},
\[
  D_f(x,y)
  = \frac{1}{\pi}
      \oint_{\partial\Omega} f\biggl(\frac{P(w,x)}{P(w,y)}\biggr) \abs{G_\xb(w,y)} \abs{dw}
  = \frac{1}{\pi}
      \oint_{\partial\Omega} f\biggl(\frac{P(w,x)}{P(w,y)}\biggr) \abs{G_\xb(w,x) dw}.
\]
Since $G_\xb(w,y)$ and $dw$ are orthogonal to each other, with $G_\xb(w,y)$ pointing 90\degree\ to the left of $dw$,
we have
\begin{equation}\label{eq:path-to-contour}
  \abs{G_\xb(w,y) dw} = i \overline{G_\xb(w,y)} dw = i G_x(w,y) dw,
\end{equation}
resulting in
\[
  D_f(x,y)
  = \frac{i}{\pi} \oint_{\partial\Omega} f\biggl(\frac{P(w,x)}{P(w,y)}\biggr) G_x(w,y) dw.
\]
Note that the complex integral is computed counter-clockwise for the outer boundary loop and clockwise for the internal boundary loops.
The Lemma then follows by Observation~\ref{observation:Poisson-quotient}.
\end{proof}

\begin{observation}\label{observation:Green-holomorphic}
Since $G(x,y)$ is harmonic in $x$ and $y$, $G_x$ is a function of $x$ and not of $\xb$. For the same reason, $G_{x\yb}$ is a function of only $x$ and $\yb$. As a result, $G_x$ and $G_{x\yb}$ are both holomorphic in $x$, except at possible poles.
\end{observation}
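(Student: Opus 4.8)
The plan is to translate the two harmonicity hypotheses into Wirtinger form and then differentiate, using nothing more than the factorization of the Laplacian and the smoothness of $G$ away from its pole. Recall the standard identity $\nabla_x^2 = 4\,\partial_x\partial_\xb$, so that the statement ``$G(x,y)$ is harmonic in $x$ on $\Omega\setminus\{y\}$'' is exactly the scalar identity $G_{x\xb}=0$ there, and symmetrically ``harmonic in $y$'' gives $G_{y\yb}=0$. Because $G$ is $C^\infty$ (indeed real-analytic, being harmonic) away from the diagonal $x=y$, all the mixed Wirtinger derivatives $\partial_x,\partial_\xb,\partial_y,\partial_\yb$ commute freely by Schwarz's theorem. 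These two scalar identities plus free commutation are the only inputs I expect to need.

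To show that $G_x$ depends on $x$ but not on $\xb$, I would simply verify the Cauchy--Riemann condition $\partial_\xb(G_x)=0$. But $\partial_\xb(G_x)=G_{x\xb}=0$ by harmonicity in $x$, so $G_x$ is holomorphic in $x$. For $G_{x\yb}$ I would verify the two annihilation conditions separately. First, $\partial_\xb(G_{x\yb})=\partial_\yb(G_{x\xb})=\partial_\yb(0)=0$, after exchanging the $\xb$- and $\yb$-derivatives, which shows no dependence on $\xb$. Second, $\partial_y(G_{x\yb})=\partial_x(G_{y\yb})=\partial_x(0)=0$, this time invoking harmonicity in $y$, which shows no dependence on $y$. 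Hence $G_{x\yb}$ is a function of $x$ and $\yb$ alone, and in particular holomorphic in $x$.

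Finally, to justify the phrase ``except at possible poles'', I would substitute the explicit decomposition $G=\log\frac{1}{\abs{x-y}}+H$ from \eqref{eq:g}. Since $\partial_x\log\frac{1}{\abs{x-y}}=-\frac{1}{2(x-y)}$ and $H$ is harmonic (so that $H_x$ and $H_{x\yb}$ are already holomorphic in $x$ by the argument above), we obtain $G_x=-\frac{1}{2(x-y)}+H_x$, exhibiting a single simple pole at $x=y$ and no other singularity; applying $\partial_\yb$ annihilates the singular term, so that in fact $G_{x\yb}=H_{x\yb}$ is regular.

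The computation is short, and I anticipate no genuine analytic difficulty. The one point demanding care is precisely the free interchange of the Wirtinger derivatives at each step, and, relatedly, keeping straight \emph{which} harmonicity (in $x$ versus in $y$) is being invoked to kill each mixed derivative; I expect that bookkeeping, rather than any estimate, to be the main thing to get right.
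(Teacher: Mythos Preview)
Your argument is correct. The paper, however, does not prove this statement at all: it is labelled an ``Observation'' and is simply asserted, with the phrase ``Since $G(x,y)$ is harmonic in $x$ and $y$'' doing all the work. So rather than differing from the paper's proof, your write-up supplies the details the paper leaves implicit. The content you provide---the Wirtinger factorization $\nabla_x^2=4\partial_x\partial_\xb$, the annihilation $\partial_\xb G_x=G_{x\xb}=0$, and the commutation of mixed Wirtinger derivatives to handle $G_{x\yb}$---is exactly what underlies the observation, and your identification of the singular term via $G_x=-\tfrac{1}{2(x-y)}+H_x$ matches the paper's later formulas~\eqref{eq:gx} and~\eqref{eq:gxyb=hxyb}.
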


\begin{lemma}\label{lemma:gx-proportional-1/gxb}
For any $x,y\in\Omega$ with $x\ne y$,
\[
  G_x(x,y) \propto \frac{1}{G_\xb(x,y)}.
\]
\end{lemma}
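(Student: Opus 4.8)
The plan is to exploit the fact that Green's function is real-valued, which pins down the relationship between its two Wirtinger derivatives $G_x$ and $G_\xb$. For any real-valued function $u$ of a complex variable $x = x_1 + i x_2$ one has $u_x = \tfrac{1}{2}(u_{x_1} - i u_{x_2})$ with $u_{x_1}$ and $u_{x_2}$ real, so that $u_\xb = \overline{u_x}$. Applying this to $G(\cdot,y)$, which is real on $\Omega$, I would first record the identity $G_\xb(x,y) = \overline{G_x(x,y)}$.

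Given this identity, the proportionality reduces to a one-line check. By the definition of $\propto$, it suffices to verify that the ratio $\frac{G_x(x,y)}{1/G_\xb(x,y)} = G_x(x,y)\,G_\xb(x,y)$ is a positive real number. Substituting $G_\xb = \overline{G_x}$ turns this product into $G_x(x,y)\,\overline{G_x(x,y)} = \abs{G_x(x,y)}^2$, which is manifestly real and non-negative, hence the two quantities have the same argument.

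The one point deserving care — and the only real obstacle — is that $\abs{G_x(x,y)}^2$ must be strictly positive, both so that $1/G_\xb$ is defined and so that the ratio is genuinely positive rather than zero; that is, one needs $\nabla_x G(x,y) \neq 0$ for $x \neq y$. I expect this non-vanishing of the gradient to be the delicate issue in the multiply-connected setting, since, in contrast to the simply-connected case treated by conformal mapping, the Green's function may now possess interior critical (saddle) points, where $G_x$ and $G_\xb$ vanish together. Accordingly I would state the conclusion as holding at every point with $\nabla_x G \neq 0$; away from those isolated exceptional points the computation above establishes $G_x \propto 1/G_\xb$ directly.
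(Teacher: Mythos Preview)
Your argument is correct and is essentially the paper's own proof: the paper simply observes that $G$ is real, hence $G_\xb=\overline{G_x}$, and then invokes the elementary fact that $a\propto 1/\bar a$ for any $a\in\CC\setminus\{0\}$. Your additional care about the possible vanishing of $G_x$ at saddle points in the multiply-connected case is a point the paper does not address in this lemma (it only surfaces later, in the proof of Theorem~\ref{theorem:parallel}), so your caveat is well taken.
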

\begin{proof}
Since $G$ is real, we have $G_\xb=\overline{G_x}$, and the Lemma then follows, because $a\propto 1/\bar{a}$ for any $a\in\CC\setminus\{0\}$.
\end{proof}

\begin{lemma}\label{lemma:limit}
For any $y,z\in\Omega$ with $y\ne z$,
\[
  \lim_{x\to y} (x-y) \frac{G_x(x,y)}{G_x(x,z)} = -\frac{1}{2G_x(y,z)}
\]
\end{lemma}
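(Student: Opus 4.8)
The plan is to read off the leading singular behavior of the numerator directly from the explicit representation \eqref{eq:g} and to handle the denominator by plain continuity. The factor $(x-y)$ is precisely what cancels the pole of $G_x(x,y)$ at $x=y$, leaving a finite nonzero limit.

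First I would compute the Wirtinger derivative of the singular part. Writing $\log\frac{1}{\abs{x-y}}=-\frac12\bigl(\log(x-y)+\log(\xb-\yb)\bigr)$ locally and differentiating with respect to $x$ (treating $\xb$ as independent) kills the second summand and gives $-\frac{1}{2(x-y)}$. Hence from \eqref{eq:g},
\[
  G_x(x,y) = -\frac{1}{2(x-y)} + H_x(x,y).
\]
The key regularity fact is that $H(\cdot,y)$ is harmonic on all of $\Omega$, including at $x=y$ (it is the solution of the Dirichlet problem, carrying no singularity there), so by the reasoning of Observation~\ref{observation:Green-holomorphic} its derivative $H_x(\cdot,y)$ is holomorphic, hence continuous and in particular bounded, in a neighborhood of $y$. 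Multiplying the display by $(x-y)$ then yields $(x-y)G_x(x,y) = -\frac12 + (x-y)H_x(x,y)$, and since the last term is $(x-y)$ times a bounded quantity, I obtain $\lim_{x\to y}(x-y)G_x(x,y) = -\frac12$.

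For the denominator, the only singularity of $G(\cdot,z)$ sits at $x=z$; because $y\ne z$ by hypothesis, $G_x(\cdot,z)$ is holomorphic, hence continuous, near $y$, so $G_x(x,z)\to G_x(y,z)$ as $x\to y$. At points where $G_x(y,z)\ne 0$ (which is implicit in the statement, since the limit is written with $G_x(y,z)$ in a denominator), the algebra of limits combines the two pieces to give $-\frac12\big/G_x(y,z) = -\frac{1}{2G_x(y,z)}$, as claimed.

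The only genuine care is the Wirtinger bookkeeping that produces the coefficient $-\tfrac12$ and the verification that $H_x$ stays bounded at the pole; everything else is a routine limit-of-a-quotient argument. I do not expect the critical-point structure of $G(\cdot,z)$ on multiply-connected domains to enter here, since for the limit formula it suffices to work at a point with $G_x(y,z)\ne0$.
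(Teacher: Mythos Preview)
Your argument is correct and essentially identical to the paper's: both derive $G_x(x,y)=-\frac{1}{2(x-y)}+H_x(x,y)$ from \eqref{eq:g}, use the finiteness of $H_x$ near $y$ to get $\lim_{x\to y}(x-y)G_x(x,y)=-\tfrac12$, and then divide by $G_x(y,z)$ via continuity. Your write-up is just a bit more explicit about the Wirtinger computation and the handling of the denominator.
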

\begin{proof}
It follows from~\eqref{eq:g} that
\begin{equation}\label{eq:gx}
  G_x(x,y) = -\frac{1}{2(x-y)} + H_x(x,y).
\end{equation}
Therefore,
\[
  \lim_{x\to y} (x-y) G_x(x,y) = -\frac12,
\]
because $H$ is twice continuously differentiable and thus $H_x$ is finite. The Lemma follows.
\end{proof}

\begin{lemma}\label{lemma:gxyb-real}
For any $x\in\Omega$,
\[
  G_{x\yb}(x,x)
  \in \RR.
\]
\end{lemma}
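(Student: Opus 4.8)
The plan is to derive the result from just two structural properties of the Green's function recorded in Section~\ref{sec:preliminaries}: that $G$ is real-valued and that it is symmetric, $G(x,y)=G(y,x)$. Combined, these will force $G_{x\yb}(x,x)$ to coincide with its own complex conjugate, which is exactly the claim.

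First I would exploit reality. Since $G(x,y)\in\RR$, Wirtinger conjugation gives $\overline{G_x}=G_\xb$, and applying this twice (once in each slot, using that conjugation swaps holomorphic and antiholomorphic derivatives while $\overline{G}=G$) yields
\[
  \overline{G_{x\yb}(a,b)} = G_{\xb y}(a,b)
\]
for all admissible $a,b$. Next I would exploit symmetry. Differentiating the identity $G(a,b)=G(b,a)$ with respect to the first slot gives $G_x(a,b)=G_y(b,a)$; differentiating this once more with respect to the conjugate of the second slot converts it into the mixed-derivative swap
\[
  G_{x\yb}(a,b) = G_{\xb y}(b,a).
\]
The only care needed here is to keep track of which slot each Wirtinger operator acts on after the variables have been interchanged.

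Evaluating both identities on the diagonal $a=b=x$ then closes the argument: symmetry gives $G_{x\yb}(x,x)=G_{\xb y}(x,x)$, while reality gives $G_{\xb y}(x,x)=\overline{G_{x\yb}(x,x)}$. Chaining the two produces $G_{x\yb}(x,x)=\overline{G_{x\yb}(x,x)}$, so the quantity is real.

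The one genuine obstacle is to confirm that $G_{x\yb}(x,x)$ is even finite and that the derivative exchanges above are legitimate on the diagonal, where $G$ carries its logarithmic pole. Here I would use \eqref{eq:g} and write the singular part as $-\log\abs{x-y}=-\tfrac12\log(x-y)-\tfrac12\log(\xb-\yb)$. Its $x$-derivative is $-\tfrac{1}{2(x-y)}$, which depends only on the holomorphic combination $x-y$, so applying $\partial_{\yb}$ annihilates it entirely; consequently $G_{x\yb}=H_{x\yb}$, and since $H$ is smooth in the interior, $G_{x\yb}(x,x)$ is well defined and the manipulations are justified. I expect this finiteness and smoothness check to be the part that most needs care, whereas the algebraic core is simply reality combined with symmetry.
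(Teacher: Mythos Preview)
Your argument is correct and essentially identical to the paper's: you use $G_{x\yb}=H_{x\yb}$ (smoothness of $H$) to establish finiteness on the diagonal, and then combine reality of $G$ (giving $\overline{G_{x\yb}}=G_{\xb y}$) with symmetry of $G$ (giving $G_{x\yb}(x,x)=G_{\xb y}(x,x)$) to conclude. The only difference is cosmetic—you spell out the two equalities separately, whereas the paper collapses them into a single chained display.
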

\begin{proof}
It follows from~\eqref{eq:gx} that
\begin{equation}\label{eq:gxyb=hxyb}
  G_{x\yb}(x,y) = H_{x\yb}(x,y),
\end{equation}
and since $H$ is twice continuously differentiable, $G_{x\yb}(x,x)$ exists, even though $G(x,x)$ does not. By the symmetry of $G$, we further have
\[
  \overline{G_{x\yb}(x,x)} = G_{\xb y}(x,x) = G_{x\yb}(x,x),
\]
implying the Lemma.
\end{proof}

%%%%%%%%%%%%%%%%%%%%%%%%%%%%%%%%%%%%%%%%%%%%%%%%%%%%%

\section{The $\chi^2$-divergence Distance}\label{sec:chi2}

Let us now focus on the \emph{$\chi^2$-divergence distance}, which is given by the function $f(t)=t^2-1$, and denote it by $D(x,y) = D_f(x,y)$. Note that $D$ is infinite on the boundary $\partial\Omega$, but this does not pose a problem. We first express $D$ in terms of the derivatives of $G$ and $H$, mentioned in \eqref{eq:g}.

\begin{theorem}\label{theorem:chi2-as-g-and-h}
The $\chi^2$-divergence distance can be written as
\begin{equation}\label{eq:distance-as-g-and-h}
  D(x,y) = 2 \frac{H_x(x,x)}{G_x(x,y)} + \frac12 \frac{G_{xx}(x,y)}{G_x(x,y)^2} - 1.
\end{equation}
\end{theorem}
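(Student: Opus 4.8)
The plan is to evaluate $D$ directly from the contour-integral representation in Lemma~\ref{lemma:df-using-gx}, specialized to the $\chi^2$ function $f(t)=t^2-1$, by means of the residue theorem. Substituting $f$ and splitting the integrand gives
\[
  D(x,y)
  = \frac{i}{\pi}\oint_{\partial\Omega}\frac{G_x(w,x)^2}{G_x(w,y)}\,dw
  - \frac{i}{\pi}\oint_{\partial\Omega}G_x(w,y)\,dw,
\]
and I would handle the two contour integrals separately. By Observation~\ref{observation:Green-holomorphic} both integrands are meromorphic in $w$, so, with the orientation fixed in Lemma~\ref{lemma:df-using-gx}, each integral equals $2\pi i$ times the sum of the residues at the poles lying inside $\Omega$. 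The computation then reduces to locating those poles and reading off the residues from the local expansion~\eqref{eq:gx}.

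The second integral is immediate. By~\eqref{eq:gx} we have $G_x(w,y)=-\frac{1}{2(w-y)}+H_x(w,y)$ with $H_x$ holomorphic, so its only pole in $\Omega$ is the simple pole at $w=y$ with residue $-\frac12$; hence $-\frac{i}{\pi}\oint_{\partial\Omega}G_x(w,y)\,dw=-1$, which supplies the constant term of~\eqref{eq:distance-as-g-and-h}.

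For the first integral I would expand the integrand about its double pole at $w=x$. Using~\eqref{eq:gx} to write $G_x(w,x)=-\frac{1}{2(w-x)}+H_x(x,x)+(w-x)H_{xx}(x,x)+\cdots$ and $G_x(w,y)=G_x(x,y)+(w-x)G_{xx}(x,y)+\cdots$, I would divide the two Laurent series and collect the coefficient of $(w-x)^{-1}$, obtaining
\[
  \operatorname{Res}_{w=x}\frac{G_x(w,x)^2}{G_x(w,y)}
  = -\frac{H_x(x,x)}{G_x(x,y)}-\frac{G_{xx}(x,y)}{4\,G_x(x,y)^2}.
\]
Multiplying by $\frac{i}{\pi}\cdot 2\pi i=-2$ reproduces precisely the first two terms of~\eqref{eq:distance-as-g-and-h}.

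The difficulty, and the genuine departure from the simply-connected setting of Chen et al., is that in a multiply-connected domain $G_x(\cdot,y)$ has additional interior zeros --- the critical points of the Green's function --- and each is a further simple pole of $G_x(w,x)^2/G_x(w,y)$ that a priori contributes to the residue sum. To recover the clean identity~\eqref{eq:distance-as-g-and-h}, I must show that these critical-point residues contribute nothing. My plan is to argue this globally: the integrand extends to a meromorphic differential on the Schottky double of $\Omega$, whose residues sum to zero, and I would then use the reflection relation $G_\xb=\overline{G_x}$ underlying Lemma~\ref{lemma:gx-proportional-1/gxb} to pair each interior critical point with its mirror image. Establishing that this forces the interior critical-point contributions to drop out is the step I expect to be the main obstacle.
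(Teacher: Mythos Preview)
Your core computation is exactly the paper's: start from Lemma~\ref{lemma:df-using-gx}, split into the two contour integrals, and evaluate each by the residue theorem. Your Laurent-series extraction of the $(w-x)^{-1}$ coefficient is just the explicit form of the second-order-pole derivative formula the paper uses, and both give the same residue at $w=x$; the treatment of the simple pole at $w=y$ is likewise identical.

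Where you go further is in worrying about the zeros of $G_x(\cdot,y)$ --- the saddle points of Green's function --- which for an $(m+1)$-connected domain give $m$ additional simple poles of $G_x(w,x)^2/G_x(w,y)$ inside $\Omega$. The paper does \emph{not} address this: its proof simply asserts ``the first integrand has a second-order pole at $w=x$'' and applies the residue theorem as if that were the only pole. So the obstacle you flag is not resolved by the paper's own argument; it is a gap there as well. Your Schottky-double idea for showing the saddle-point residues cancel is a plausible line of attack, but you will get no help from the paper on this point --- if you want the identity~\eqref{eq:distance-as-g-and-h} to hold rigorously in the genuinely multiply-connected case, that extra argument is yours to supply.
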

\begin{proof}
By Lemma~\ref{lemma:df-using-gx},
\[
  D(x,y) = \frac{i}{\pi} \oint_{\partial\Omega} \frac{G_x(w,x)^2}{G_x(w,y)} dw
         - \frac{i}{\pi} \oint_{\partial\Omega} G_x(w,y) dw.
\]
According to Observation~\ref{observation:Green-holomorphic}, both integrands are holomorphic. For the second integrand, which has a first-order pole in $\Omega$ at $w=y$, applying Cauchy's Residue Theorem \cite{Ahlfors:1979:CAA} and using Lemma~\ref{lemma:limit}, we get
\[
  \oint_{\partial\Omega} G_x(w,y) dw = 2\pi i \lim_{w\to y} (w-y) G_x(w,y) = -\pi i.
\]
The first integrand has a second-order pole at $w=x$, and so by Cauchy's Residue Theorem and Lemma~\ref{lemma:limit},
\begin{align*}
  \oint_{\partial\Omega} \frac{G_x(w,x)^2}{G_x(w,y)} dw
  &= 2\pi i \lim_{w\to x} \frac{d}{dw} \biggl( {(w-x)}^2 \frac{G_x(w,x)^2}{G_x(w,y)} \biggr)\\
  &= 2\pi i \lim_{w\to x}
       \biggl( 2 (w-x) G_x(w,x) \frac{G_x(w,x)+(w-x) G_{xx}(w,x)}{G_x(w,y)}
               - \bigl[ (w-x) G_x(w,x) \bigr]^2 \frac{G_{xx}(w,y)}{G_x(w,y)^2} \biggr)\\
  &= -2\pi i
       \biggl( \frac{\lim_{w\to x} \bigl( G_x(w,x) + (w-x) G_{xx}(w,x) \bigr)}{G_x(x,y)}
               + \frac{G_{xx}(x,y)}{4 G_x(x,y)^2} \biggr).
\end{align*}
Since it follows from~\eqref{eq:g} that
\[
  G_x(x,y) = -\frac{1}{2(x-y)} + H_x(x,y)
\]
and
\[
  G_{xx}(x,y) = \frac{1}{2{(x-y)}^2} + H_{xx}(x,y),
\]
we have
\[
  \lim_{w\to x} \bigl( G_x(w,x) + (w-x) G_{xx}(w,x) \bigr)
  = \lim_{w\to x} \bigl( H_x(w,x) + (w-x) H_{xx}(w,x) \bigr)
  = H_x(x,x),
\]
and the Theorem follows.
\end{proof}

\begin{figure}[t!]\centering
  \includegraphics[width=0.9\columnwidth]{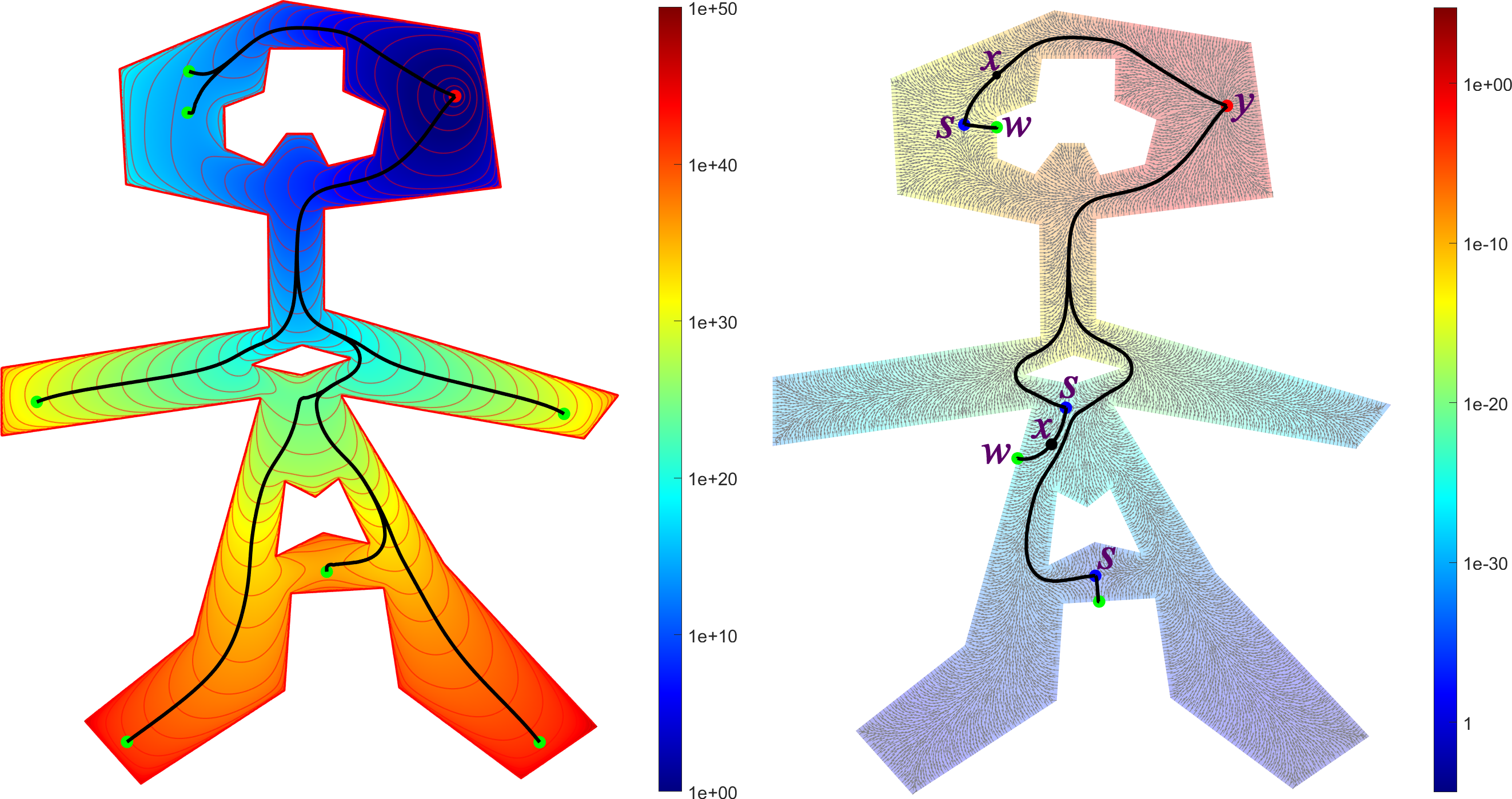}
  \caption{(Left) Gradient-descent paths of the $\chi^2$ divergence distance from various source points (green) to a common target point (red). The domain is color-coded according to the distance. The red curves are contours (level sets) of the distance function.
  (Right) The integral path $\gamma$ is a sub-path of the gradient-descent path of the Green's function from target $y$ to $x$, continuing further to the boundary point $w$, possibly passing through a saddle point $s$. The gradient of Green's function is visualized as a unit-length direction field, while the magnitude of the gradient is the color-coded background.
   \label{fig:example}}
\end{figure}

\begin{theorem}\label{theorem:parallel}
The gradient of the $\chi^2$-divergence distance to $y$ is proportional to the negative gradient of the Green's function with pole $y$, that is,
\[
  \nabla_x D(x,y) \propto -\nabla_x G(x,y)
\]
\end{theorem}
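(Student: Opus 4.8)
The plan is to work directly from the closed form in Theorem~\ref{theorem:chi2-as-g-and-h} and differentiate it, rather than from the contour-integral definition. Since $D$ is real-valued, the gradient in the first argument is $\nabla_x D = 2\,\partial D/\partial\xb$, so the whole computation reduces to a single Wirtinger derivative of
\[
  D(x,y) = 2\frac{H_x(x,x)}{G_x(x,y)} + \frac12\frac{G_{xx}(x,y)}{G_x(x,y)^2} - 1.
\]
First I would exploit holomorphicity: by Observation~\ref{observation:Green-holomorphic} both $G_x(x,y)$ and $G_{xx}(x,y)$ are holomorphic in their first slot, so $\partial_\xb$ annihilates each of them and, in particular, the entire middle term is holomorphic in $x$ and contributes nothing to $\partial_\xb D$. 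The only surviving piece is the numerator of the first term. Writing $\psi(x)=H_x(x,x)$ and applying the chain rule through both slots gives $\partial_\xb\psi = H_{x\xb}(x,x) + H_{x\yb}(x,x)$; the first summand vanishes because $H$ is harmonic in its first variable ($H_{x\xb}=0$), and by~\eqref{eq:gxyb=hxyb} the second equals $G_{x\yb}(x,x)$. Collecting terms I expect to obtain
\[
  \nabla_x D(x,y) = \frac{4\,G_{x\yb}(x,x)}{G_x(x,y)}.
\]

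The second step is to recognize that this is already a scalar multiple of $\nabla_x G$. By Lemma~\ref{lemma:gxyb-real} the factor $G_{x\yb}(x,x)$ is real, and since $G$ is real we have $G_\xb=\overline{G_x}$, so $1/G_x(x,y) = G_\xb(x,y)/\abs{G_x(x,y)}^2 \propto G_\xb(x,y)$, which is exactly the content of Lemma~\ref{lemma:gx-proportional-1/gxb}. Because $\nabla_x G(x,y) = 2\,G_\xb(x,y)$, I would conclude
\[
  \nabla_x D(x,y) = \frac{2\,G_{x\yb}(x,x)}{\abs{G_x(x,y)}^2}\,\nabla_x G(x,y),
\]
a real multiple of $\nabla_x G$ whose sign is precisely that of $G_{x\yb}(x,x)$ --- notably independent of $y$. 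The theorem is therefore equivalent to the single assertion $G_{x\yb}(x,x) < 0$.

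Establishing that strict sign is the real obstacle, and it is unavoidable: Lemma~\ref{lemma:gxyb-real} delivers reality but says nothing about sign or non-vanishing, and non-vanishing is exactly what rules out a spurious critical point of $D(\cdot,y)$ away from $y$ (equivalently, the \emph{single extremum} claimed in the abstract is the statement $G_{x\yb}(x,x)\ne 0$). I would prove $G_{x\yb}(x,x)<0$ by identifying it with the Bergman kernel on the diagonal: for the Green's function normalized as in~\eqref{eq:g} one has $\partial_x\partial_\yb G(x,y) = -\tfrac{\pi}{2}K(x,y)$, where $K$ is the Bergman reproducing kernel of $\Omega$, and $K(x,x)>0$ since it is a squared reproducing-kernel value. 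Equivalently, writing the Robin function $\gamma(x)=H(x,x)$ and using the symmetry $H(x,y)=H(y,x)$ one checks $\partial_x\gamma = 2H_x(x,x)$ and hence $G_{x\yb}(x,x)=\tfrac12\,\partial_x\partial_\xb\gamma(x)$, so the claim becomes strict superharmonicity of $\gamma$. The unit disk, where $\gamma(x)=\log(1-\abs{x}^2)$ and $G_{x\yb}(x,x)=-1/\bigl(2(1-\abs{x}^2)^2\bigr)$, serves as a sanity check and also fixes the sign as negative in the limit $x\to y$ (there $D$ attains its minimum, so $\nabla_x D$ points outward while $\nabla_x G$ points toward the pole). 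With $G_{x\yb}(x,x)<0$ the scalar above is negative, yielding $\nabla_x D\propto-\nabla_x G$ as claimed.
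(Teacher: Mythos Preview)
Your derivation of the formula
\[
  \nabla_x D(x,y) = \frac{4\,G_{x\yb}(x,x)}{G_x(x,y)}
                 = \frac{2\,G_{x\yb}(x,x)}{\abs{G_x(x,y)}^2}\,\nabla_x G(x,y)
\]
is exactly the paper's computation, step for step: the paper likewise kills $\nabla_x G_x$ and $\nabla_x G_{xx}$ by harmonicity, reduces $\nabla_x H_x(x,x)$ to $2G_{x\yb}(x,x)$ via $H_{x\xb}=0$ and~\eqref{eq:gxyb=hxyb}, and invokes Lemma~\ref{lemma:gxyb-real} to conclude that the scalar factor is real.

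Where you diverge is in pinning down the \emph{sign} of $G_{x\yb}(x,x)$. You import the classical Bergman--Schiffer identity $G_{x\yb}(x,y)=-\tfrac{\pi}{2}K(x,y)$ and read off $G_{x\yb}(x,x)<0$ from the positivity of the Bergman kernel on the diagonal; this is correct, valid on finitely-connected domains, and settles the matter in one line (your Robin-function reformulation is equivalent and also standard). The paper instead keeps everything internal: it writes $\phi=\nabla G/\nabla D$, uses $\nabla^2 G=0$ together with the strict subharmonicity of $D$ from Theorem~\ref{theorem:div_distance} to obtain $\nabla\phi\cdot\nabla G\le 0$, and then integrates this inequality along the gradient-descent path of $G$ from a boundary point $w$ back to $x$, using the boundary behaviour ($\nabla G$ points inward, $\nabla D$ outward by the maximum principle) to get $\phi(w)<0$ and hence $\phi(x)\le\phi(w)<0$. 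The cost of that self-containment is a careful treatment of saddle points of $G$ along the path, which you bypass entirely. Your route is shorter and cleaner; the paper's route avoids citing an external identity and makes explicit that the subharmonicity of $D$ is what drives the sign.
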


\begin{proof}
Since $G$ is harmonic, we have
\[
  \nabla_x G_x(x,y) = 2 G_{x\xb}(x,y) = 0, \qquad\text{and}\qquad
  \nabla_x G_{xx}(x,y) = \nabla_{\xb} G_{x\xb}(x,y) = 0,
\]
The harmonicity of $H$, together with~\eqref{eq:gxyb=hxyb}, implies
\[
  \nabla_x H_x(x,x)
  = 2 \frac{\partial}{\partial \xb} H_x(x,x)
  = 2 H_{x\xb}(x,x) + 2 H_{x\yb}(x,x)
  = 2 H_{x\yb}(x,x)
  = 2 G_{x\yb}(x,x).
\]
We then conclude from Theorem~\ref{theorem:chi2-as-g-and-h} that
\begin{align*}
  \nabla_x D(x,y)
  &=   2 \frac{\nabla_x H_x(x,x)}{G_x(x,y)}
     - 2 \frac{H_x(x,x) \nabla_x G_x(x,y)}{G_x(x,y)^2}
     + \frac12 \frac{\nabla_x G_{xx}(x,y)}{G_x(x,y)^2}
     - \frac{G_{xx}(x,y) \nabla_x G_x(x,y)}{G_x(x,y)^3}\\
  &= 4\frac{G_{x\yb}(x,x)}{G_x(x,y)}  = 2\frac{G_{x \yb}(x,x)}{|G_x(x,y)^2|} \nabla_x G(x,y)
\end{align*}
By Lemma~\ref{lemma:gxyb-real}, $G_{x \yb}(x,x)$ is real, so it remains to show that the ratio
\[
  \phi(x) = \phi(x,y) = \frac{\nabla_x G(x,y)}{\nabla_x D(x,y)}
\]
is also negative for any $x\in\Omega$ with $x\ne y$.

To keep the notation simple, let us omit the subscript $x$ and tacitly assume that all gradients are taken with respect to $x$. Following this convention, we then have by the harmonicity of $G$ that
\begin{align*}
  0
  &= \nabla^2 G(x,y)
   = \nabla \cdot \bigl(\phi(x) \nabla D(x,y) \bigr)\\
  &= \nabla \phi(x) \cdot \nabla D(x,y) + \phi(x) \nabla^2 D(x,y)\\
  &= \frac{\nabla \phi(x)}{\phi(x)} \cdot \nabla G(x,y) + \phi(x) \nabla^2 D(x,y),
\end{align*}
implying
\begin{equation}\label{eq:gradient-product-negative}
  \nabla \phi(x) \cdot \nabla G(x,y)
   =  - \phi(x)^2 \nabla^2 D(x,y)
  \le 0,
\end{equation}
because $D$ is subharmonic. Let us now consider the unique gradient-descent path, defined by $\nabla G$, from $y$ to $x$ and further on to some boundary point $w\in\partial\Omega$, and denote by $\gamma$ the reverse sub-path from $w$ to $x$ with vector differential $dz$. By definition of $\gamma$, we have $dz/\abs{dz}=\nabla G(z,y)/\abs{\nabla G(z,y)}$, and according to the Gradient Theorem, we then have
\begin{equation}\label{eq:path-integral}
  \phi(x) - \phi(w)
  = \int_\gamma \nabla \phi(z) \cdot dz
  = \int_\gamma \frac{\nabla \phi(z) \cdot \nabla G(z,y)}{\abs{\nabla G(z,y)}} \abs{dz} \le 0,
\end{equation}
where the last inequality follows from~\eqref{eq:gradient-product-negative}.

As $\nabla G(w,y)$ is normal to the boundary of $\Omega$ at $w$, so is $\nabla D(w,y)$ by Theorem~\ref{theorem:parallel}. But while $\nabla G(w,y)$ points \emph{inwards}, $\nabla D(w,y)$ points \emph{outwards}, because $D$ is subharmonic and hence obtains its maximum on $\partial\Omega$. Consequently, $\phi(w) < 0$, and it follows from~\eqref{eq:path-integral} that
\[
  \phi(x) \le \phi(w) < 0.
\]

Note that for multiply-connected domains, it may happen that the gradient-descent path of $G$ from $y$ to $w$ encounters a saddle point $s$ of $G$. In this case, even though $\nabla G(s)=0$, there still exists a direction of steepest descent, given by the principal curvature directions (the eigenvectors of the Hessian of $G$) at that point, and we follow this direction to extend the path beyond $s$, see the example in Fig.~\ref{fig:example} (Right). Since the number of saddle points is finite, this strategy guarantees that the path eventually terminates at some $w\in\partial\Omega$. Hence, the path $\gamma$ from $w$ to $x$ in~\eqref{eq:path-integral} is well-defined and consequently $\phi(x)<0$, even in this case.
\end{proof}

\section{Conclusion}\label{sec:conclusion}
We have shown that the $\chi^2$-divergence distance of a multiply-connected domain behaves similarly to the Green's function of that domain, namely that it has no local minima, except at the target point, where it has a global minimum of zero. This implies that the divergence distance function may be used to trace paths between pairs of points in the domain by gradient descent. Our numerical experiments have indicated that all other divergence distances (based on other strictly convex $f$'s) also have this desirable property, although we have been unable to prove it. The obstacle is applying Cauchy's Residue Theorem to compute the contour integral in the proof of Theorem \ref{theorem:parallel}. For many $f$'s, the resulting integrand may have branches or higher-order poles, significantly complicating the computation of that integral.

%%%%%%%%%%%%%%%%%%%%%%%%%%%%%%%%%%%%%%%%%%%%%%%%%%%%%

\bibliographystyle{abbrv}
\bibliography{journals,references}

\end{document}